\documentclass{amsart}

\usepackage{amsthm}
\usepackage[leqno]{amsmath}
\usepackage[all]{xy} \SelectTips{eu}{}
\usepackage{latexsym,amsfonts,amssymb}
\usepackage{hyperref}
\usepackage{mathptmx}
\usepackage{nicefrac}
\usepackage{stmaryrd}


\newcommand{\numberseries}{\bfseries}   

\newlength{\thmtopspace}                
\newlength{\thmbotspace}                
\newlength{\thmheadspace}               
\newlength{\thmindent}                  

\setlength{\thmtopspace}%
{0.7\baselineskip plus 0.35\baselineskip minus 0.2\baselineskip}
\setlength{\thmbotspace}%
{0.45\baselineskip plus 0.15\baselineskip minus 0.1\baselineskip} 
\setlength{\thmheadspace}{0.5em}
\setlength{\thmindent}{0pt}     

\newtheoremstyle{fixed bf head,slanted body}
                {\thmtopspace}{\thmbotspace}{\slshape}
                {\thmindent}{\bfseries}{.}{\thmheadspace}
                {{\numberseries \thmnumber{#2\;}}\thmname{#1}\thmnote{ (#3)}}

\newtheoremstyle{variable bf head,slanted body}
                {\thmtopspace}{\thmbotspace}{\slshape}
                {\thmindent}{\bfseries}{.}{\thmheadspace}
                {{\numberseries \thmnumber{#2\;}}\thmname{#1}\thmnote{ #3}}

\newtheoremstyle{fixed bf head,upright body}
                {\thmtopspace}{\thmbotspace}{\upshape}
                {\thmindent}{\bfseries}{.}{\thmheadspace}
                {{\numberseries \thmnumber{#2\;}}\thmname{#1}\thmnote{ (#3)}}

\newtheoremstyle{numbered paragraph}
                {\thmtopspace}{\thmbotspace}{\upshape}
                {\thmindent}{\upshape}{}{\thmheadspace}
                {{\numberseries \thmnumber{#2.}}}

\theoremstyle{fixed bf head,slanted body}
\newtheorem{res}{}[section]
\newtheorem{thm}[res]{Theorem}          \newtheorem*{thm*}{Theorem}
      \newtheorem*{prp*}{Proposition}
        \newtheorem*{cor*}{Corollary}
\newtheorem{lem}[res]{Lemma}            \newtheorem*{lem*}{Lemma}

\theoremstyle{variable bf head,slanted body}
     \newtheorem*{introthm*}{Theorem}

\theoremstyle{fixed bf head,upright body}
            \newtheorem*{stp*}{Setup}
\newtheorem{dfn}[res]{Definition}       \newtheorem*{dfn*}{Definition}
     \newtheorem*{con*}{Construction}
      \newtheorem*{obs*}{Observation}
\newtheorem{rmk}[res]{Remark}           \newtheorem*{rmk*}{Remark}
\newtheorem{exa}[res]{Example}          \newtheorem*{exa*}{Example}
         \newtheorem*{qst*}{Question}

\theoremstyle{numbered paragraph}
\newtheorem{ipg}[res]{}


\newlength{\thmlistleft}        
\newlength{\thmlistright}       
\newlength{\thmlistpartopsep}   
\newlength{\thmlisttopsep}      
\newlength{\thmlistparsep}      
\newlength{\thmlistitemsep}     

\setlength{\thmlistleft}{2.5em}
\setlength{\thmlistright}{0pt}
\setlength{\thmlistitemsep}{0.5ex}
\setlength{\thmlistparsep}{0pt}
\setlength{\thmlisttopsep}{1.5\thmlistitemsep}
\setlength{\thmlistpartopsep}{0pt}


\newcounter{eqc} 
  {\end{list}}%





\newcounter{prt}
\newenvironment{prt}{\begin{list}{\upshape (\alph{prt})}%
    {\usecounter{prt}%
      \setlength{\leftmargin}{\thmlistleft}%
      \setlength{\labelwidth}{\thmlistleft}%
      \setlength{\rightmargin}{\thmlistright}%
      \setlength{\partopsep}{\thmlistpartopsep}%
      \setlength{\topsep}{\thmlisttopsep}%
      \setlength{\parsep}{\thmlistparsep}%
      \setlength{\itemsep}{\thmlistitemsep}}}%
  {\end{list}}%

\newcommand{\prtlbl}[1]{{\upshape(#1)}}



\newcommand{\pgref}[1]{\ref{#1}}
\newcommand{\thmref}[2][Theorem~]{#1\pgref{thm:#2}}

\newcommand{\lemref}[2][Lemma~]{#1\pgref{lem:#2}}

\newcommand{\dfnref}[2][Definition~]{#1\pgref{dfn:#2}}

\newcommand{\rmkref}[2][Remark~]{#1\pgref{rmk:#2}}
\newcommand{\secref}[2][Section~]{#1\ref{sec:#2}}

\renewcommand{\eqref}[1]{(\pgref{eq:#1})}


\hyphenation{mo-dule mo-dul-es com-plex com-plex-es mor-phism ho-mo-mor-phism
iso-mor-phism pro-jec-tive in-jec-tive re-so-lu-tion ho-mo-lo-gy
ho-mo-lo-gi-cal ho-mo-lo-gi-cally du-a-liz-ing re-si-due}


\makeatletter
\def\@nobreak@#1{\mathchoice%
  {\nobreakdef@\displaystyle\f@size{#1}}%
  {\nobreakdef@\nobreakstyle\tf@size{\firstchoice@false #1}}%
  {\nobreakdef@\nobreakstyle\sf@size{\firstchoice@false #1}}%
  {\nobreakdef@\nobreakstyle\ssf@size{\firstchoice@false #1}}%
  \check@mathfonts}%
\def\nobreakdef@#1#2#3{\hbox{{%
                    \everymath{#1}%
                    \let\f@size#2\selectfont%
                    #3}}}%
\makeatother

\DeclareSymbolFont{usualmathcal}{OMS}{cmsy}{m}{n}
\DeclareSymbolFontAlphabet{\mathcal}{usualmathcal}

\DeclareSymbolFont{letters}{OML}{txmi}{m}{it}

\DeclareMathSymbol{\alpha}{\mathord}{letters}{"0B}
\DeclareMathSymbol{\beta}{\mathord}{letters}{"0C}
\DeclareMathSymbol{\gamma}{\mathord}{letters}{"0D}
\DeclareMathSymbol{\delta}{\mathord}{letters}{"0E}
\DeclareMathSymbol{\epsilon}{\mathord}{letters}{"0F}
\DeclareMathSymbol{\zeta}{\mathord}{letters}{"10}
\DeclareMathSymbol{\eta}{\mathord}{letters}{"11}
\DeclareMathSymbol{\theta}{\mathord}{letters}{"12}
\DeclareMathSymbol{\iota}{\mathord}{letters}{"13}
\DeclareMathSymbol{\kappa}{\mathord}{letters}{"14}
\DeclareMathSymbol{\lambda}{\mathord}{letters}{"15}
\DeclareMathSymbol{\mu}{\mathord}{letters}{"16}
\DeclareMathSymbol{\nu}{\mathord}{letters}{"17}
\DeclareMathSymbol{\xi}{\mathord}{letters}{"18}
\DeclareMathSymbol{\pi}{\mathord}{letters}{"19}
\DeclareMathSymbol{\rho}{\mathord}{letters}{"1A}
\DeclareMathSymbol{\sigma}{\mathord}{letters}{"1B}
\DeclareMathSymbol{\tau}{\mathord}{letters}{"1C}
\DeclareMathSymbol{\upsilon}{\mathord}{letters}{"1D}
\DeclareMathSymbol{\phi}{\mathord}{letters}{"1E}
\DeclareMathSymbol{\chi}{\mathord}{letters}{"1F}
\DeclareMathSymbol{\psi}{\mathord}{letters}{"20}
\DeclareMathSymbol{\omega}{\mathord}{letters}{"21}
\DeclareMathSymbol{\varepsilon}{\mathord}{letters}{"22}
\DeclareMathSymbol{\vartheta}{\mathord}{letters}{"23}
\DeclareMathSymbol{\varpi}{\mathord}{letters}{"24}
\DeclareMathSymbol{\varrho}{\mathord}{letters}{"25}
\DeclareMathSymbol{\varsigma}{\mathord}{letters}{"26}
\DeclareMathSymbol{\varphi}{\mathord}{letters}{"27}

\DeclareMathSymbol{\Gamma}{\mathord}{letters}{"00}
\DeclareMathSymbol{\Delta}{\mathord}{letters}{"01}
\DeclareMathSymbol{\Theta}{\mathord}{letters}{"02}
\DeclareMathSymbol{\Lambda}{\mathord}{letters}{"03}
\DeclareMathSymbol{\Xi}{\mathord}{letters}{"04}
\DeclareMathSymbol{\Pi}{\mathord}{letters}{"05}
\DeclareMathSymbol{\Sigma}{\mathord}{letters}{"06}
\DeclareMathSymbol{\Upsilon}{\mathord}{letters}{"07}
\DeclareMathSymbol{\Phi}{\mathord}{letters}{"08}
\DeclareMathSymbol{\Psi}{\mathord}{letters}{"09}
\DeclareMathSymbol{\Omega}{\mathord}{letters}{"0A}

\DeclareMathSymbol{\upGamma}{\mathalpha}{operators}{"00}
\DeclareMathSymbol{\upDelta}{\mathalpha}{operators}{"01}
\DeclareMathSymbol{\upTheta}{\mathalpha}{operators}{"02}
\DeclareMathSymbol{\upLambda}{\mathalpha}{operators}{"03}
\DeclareMathSymbol{\upXi}{\mathalpha}{operators}{"04}
\DeclareMathSymbol{\upPi}{\mathalpha}{operators}{"05}
\DeclareMathSymbol{\upSigma}{\mathalpha}{operators}{"06}
\DeclareMathSymbol{\upUpsilon}{\mathalpha}{operators}{"07}
\DeclareMathSymbol{\upPhi}{\mathalpha}{operators}{"08}
\DeclareMathSymbol{\upPsi}{\mathalpha}{operators}{"09}
\DeclareMathSymbol{\upOmega}{\mathalpha}{operators}{"0A}





\DeclareMathAlphabet\PazoBB{U}{fplmbb}{m}{n}%

\renewcommand{\c}[1]{\mathcal{#1}}
\newcommand{\ter}{\mathsf{T}}

\newcommand{\id}[1]{\operatorname{id}_{#1}}

\newcommand{\Set}{\mathsf{Set}}

\newcommand{\Grp}{\mathsf{Grp}}
\newcommand{\Top}{\mathsf{Top}}
\newcommand{\Alg}[2][]{\mathsf{Alg}_{#1}(#2)}

\newcommand{\sign}{\upSigma}
\newcommand{\iden}{\mathbb{I}}

\newcommand{\AS}[1][]{\mathsf{Struc}_{#1}(\sign,\iden)}
\newcommand{\ASarg}[3][]{\mathsf{Struc}_{#1}(#2,#3)}
\newcommand{\ff}[3]{U^{#1,#2}_{#3}}
\newcommand{\ffsub}[1]{U_{#1}}

\newcommand{\eval}[1][\c{C}]{e_{#1}}
\newcommand{\ind}[1]{\tilde{#1}}


\begin{document}

\title{A note on transport of algebraic structures}

\author{Henrik Holm}

\address{University of Copenhagen, 2100 Copenhagen {\O}, Denmark}
 
\email{holm@math.ku.dk}

\urladdr{http://www.math.ku.dk/\~{}holm/}


\keywords{Algebraic structure; algebraic theory; completion of metric space; equational class; \smash{Stone-{\v C}ech} compactification; universal covering space; universal locally connected refinement.}

\subjclass[2010]{03C05; 18C10}


\begin{abstract}
  We study transport of algebraic structures and prove a theorem which subsumes results of Comfort and Ross on topological group structures on \smash{Stone-{\v C}ech} compactifications, of Chevalley and of Gil de Lamadrid and Jans on topological group and ring structures on universal covering spaces, and of Gleason on topological group structures on universal locally connected refinements.
\end{abstract}

\maketitle


\section{Introduction}

Transport of algebraic structures---a concept that will be made precise in \secref[Sect.~]{main}---is a well-known phenomenon. To illustrate what we have in mind, we mention some results from the literature, which have motivated this work.

\begin{prt}

\item Let $M$ be a metric space with completion \smash{$M \to \widehat{M}$}. Algebraic structures on $M$ tend to be inherited by \smash{$\widehat{M}$}. For example,  let $M=\mathbb{Q}$ be equipped with the metric induced by the Euclidian norm \smash{$|\cdot|$} or the $p$-adic norm \smash{$|\cdot|_p$}. Then one has \smash{$\widehat{M}=\mathbb{R}$} (the real numbers) or \smash{$\widehat{M}=\mathbb{Q}_p$} (the $p$-adic numbers). In either case, $M$ is a ring\footnote{\ Of course, $\mathbb{Q}$, $\mathbb{R}$ and $\mathbb{Q}_p$ are even fields ($p$ is a prime number), but a field is not an ``algebraic structure'' in the sense discussed in this paper; see~\ref{alg-struc-set}.} in which addition and multiplication are continuous functions. As it is well-known, the completion \smash{$\widehat{M}$} can, in both of these cases, also be made into a ring with continuous addition and multiplication in such a way that \smash{$M \to \widehat{M}$} becomes a ring homomorphism. This examplifies that the algebraic structure of type ``ring'' ascends\footnote{\ In this paper, we say, loosely speaking, that an algebraic structure of a given type (such as ``group'' or ``ring'') \emph{ascends}, respectively, \emph{descends}, along a map $f \colon X \to Y$ (more precisely, along an arrow in some category) if every algebraic structure of that type on $X$, respectively, on $Y$, can be ``transported'' to $Y$ (that is, in the direction of the arrow), respectively, to $X$ (that is, against the direction of the arrow), in such a way that $f \colon X \to Y$ becomes a homomorphism of algebraic structures of the type in question. The precise definitions can be found in \secref[Sect.~]{main}.} along the map \smash{$M \to \widehat{M}$}.

\item Let $X$ be a topological space with \smash{Stone-{\v C}ech} compactification $X \to \beta X$. Comfort and Ross \cite[Thm.~4.1]{ComfortRoss} showed that if $X$ is a pseudocompact topological group, then $\beta X$ admits a structure of a topological group in such a way that $X \to \beta X$ becomes a group homomorphism (and a homeomorphism onto its image). This illustrates that the algebraic structure of type ``group'' ascends along the map $X \to \beta X$ for certain types of spaces $X$.

\item Let $X$ be a topological space which has a universal covering space $\tilde{X} \to X$. Chevalley \cite[Chap.~II\S8 Prop.~5]{Chevalley} proved that if $X$ is a topological group, then $\tilde{X}$ can be made into a topological group in such a way that $\tilde{X} \to X$ becomes a group homomorphism. By similar methods, Gil de Lamadrid and Jans~\cite[Thm.~1]{GildeLamadridJans} showed that if $X$ is a topological ring, then $\tilde{X}$ can be equipped with the structure of a topological ring such that $\tilde{X} \to X$ becomes a ring homomorphism. This illustrates that the algebraic structures of type ``group'' and ``ring'' decend along the map $\tilde{X} \to X$.

\item Let $X$ be a topological space and let $X^* \to X$ be its universal locally connected refinement in the sense of Gleason~\cite[Thm.~A]{Gleason}. In Thm.~D in \emph{loc.~cit.}~it is proved that if $X$ is a topological group, then $X^*$ can be made into a topological group in such a way that the map $X^* \to X$ becomes a group homomorphism. This illustrates that the algebraic structure of type ``group'' decends along the map $X^* \to X$.

\end{prt}

The purpose of this note is to describe circumstances under which algebraic structures ascend or descend along certain types of morphisms and to give useful applications.
Our main result, \thmref{main}, is certainly not profound; it deals with an adjoint situation and its proof is completely formal. However, despite its simplicity, the result has several useful applications; some of them are collected in Theorem~A below, which subsumes the classic results (a)--(d) mentioned above. We consider these applications as the main contents of this note, and they are our justification for presenting the details that lead to \thmref{main}.

\begin{introthm*}[A]
  The following assertions hold.
  \begin{prt}
  
  \item Let $M$ be any metric space with completion \smash{$\widehat{M}$}. Every algebraic structure on $M$~ascends uniquely along the canonical map \smash{$M \to \widehat{M}$}. 
  
  \item Let $X$ be any pseudocompact and locally compact topological space with \smash{Stone-{\v C}ech} compactification $\beta X$.  Every algebraic structure on $X$ ascends uniquely along the canonical map $X \to \beta X$.
  
  \item Let $X$ be any pointed topological space which has a pointed universal covering space $\tilde{X}$. Every algebraic structure on $X$ decends uniquely along the canonical map $\tilde{X} \to X$. 
  
  \item Let $X$ be any topological space with universal locally connected refinement $X^*$. Every algebraic structure on $X$ decends uniquely along the canonical map $X^* \to X$.
  \end{prt}
\end{introthm*}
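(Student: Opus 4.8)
My plan is to reduce all four assertions to the single formal input provided by \thmref{main}. In each case the canonical map is the unit or the counit of an adjunction in which one functor is the fully faithful inclusion of a (possibly pointed) subcategory of metric or topological spaces: parts \prtlbl{a} and \prtlbl{b} exhibit \emph{reflections}, whose unit is the displayed map and along which structures should \emph{ascend}, whereas parts \prtlbl{c} and \prtlbl{d} exhibit \emph{coreflections}, whose counit is the displayed map and along which structures should \emph{descend}. By \thmref{main} it then suffices, in each case, to check that the functor carrying the structure across---the reflector in \prtlbl{a}, \prtlbl{b}, the coreflector in \prtlbl{c}, \prtlbl{d}---preserves finite products (the empty product included, i.e.\ sends a one-point object to a one-point object); the uniqueness clause comes for free from the universal property built into that adjunction.

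For the two ascending cases I would argue as follows. In \prtlbl{a} the completion functor is the reflector onto the full subcategory of complete metric spaces, and the point to verify is that it preserves finite products: endowing a finite product with the maximum metric, a sequence is Cauchy exactly when its coordinates are and its limit is formed coordinatewise, so the comparison map $\widehat{M \times N} \to \widehat{M} \times \widehat{N}$ is an isometric bijection, while the completion of a point is a point. In \prtlbl{b} the \smash{Stone-{\v C}ech} compactification is the reflector onto compact Hausdorff spaces, and the only substantive step is again product preservation: by Glicksberg's theorem the canonical map $\beta(X \times Y) \to \beta X \times \beta Y$ is a homeomorphism as soon as $X \times Y$ is pseudocompact. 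Here one shows, using local compactness, that every finite power $X^n$ remains pseudocompact, whence $\beta(X^n) \cong (\beta X)^n$ for all $n$ and \thmref{main} applies.

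The two descending cases are almost formal. In \prtlbl{c} the universal covering functor is right adjoint to the inclusion of pointed simply connected spaces---the lifting criterion says precisely that a pointed map out of a simply connected space lifts uniquely through $\tilde{X} \to X$---and in \prtlbl{d} Gleason's refinement is, by his Theorem~A, right adjoint to the inclusion of locally connected spaces. A right adjoint preserves all limits, hence finite products computed inside the coreflective subcategory; and since a finite product of pointed simply connected (respectively, locally connected) spaces is again of that kind, these products coincide with the ambient ones. Thus the coreflector preserves finite products on the nose, and \thmref{main} applies once more.

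The one genuinely non-formal ingredient is the product preservation in \prtlbl{b}. In contrast to the coreflectors of \prtlbl{c} and \prtlbl{d}, which are right adjoints and therefore automatically commute with products, the \smash{Stone-{\v C}ech} functor is a left adjoint and has no a priori reason to do so; worse, pseudocompactness is not in general preserved by products. I expect the local-compactness hypothesis to carry the entire weight here, its sole purpose being to force pseudocompactness of all finite powers so that Glicksberg's criterion can be invoked. Everything else in the argument is bookkeeping once \thmref{main} is in hand.
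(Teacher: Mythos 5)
You follow the paper's own route: each part is an instance of \thmref{main} applied to an adjunction in which one functor is a full inclusion---completion and Stone--{\v C}ech compactification as reflectors (ascent along the unit), the universal cover and Gleason's refinement as coreflectors (descent along the counit)---and your external inputs (the max metric on finite products, Glicksberg's theorem, the lifting criterion, Gleason's Theorem~A) are exactly the paper's. Parts \prtlbl{a}, \prtlbl{c}, \prtlbl{d} are essentially the paper's proofs. One framing slip worth correcting: in \prtlbl{c} and \prtlbl{d} the hypothesis of \thmref{main} that actually needs verification is not that the coreflector preserves finite products (that is automatic for a right adjoint, as the paper notes in a footnote to its adjunction lemma) but that the \emph{inclusion} does, i.e.\ that the subcategory is closed under finite products in the ambient category. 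You do verify this closure, so nothing mathematical is lost; but note that in \prtlbl{c} one must also check that the ambient category $\c{D}_*$ of pointed spaces admitting universal covers is itself closed under finite products in $\Top_*$ (the paper cites Chevalley for this), since \thmref{main} requires \emph{both} categories to have finite products.

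The one genuine gap is in \prtlbl{b}. \thmref{main} applies to an adjunction of product-preserving functors between two categories with finite products, and your argument never produces one: you verify that the finite powers $X^n$ of the given space are pseudocompact and that $\beta(X^n)\cong(\beta X)^n$, but these are statements about a single object, and the full subcategory of $\Top$ on the powers of $X$ is not a category to which the adjunction $\beta\dashv G$ restricts (the inclusion of $\mathsf{CompHaus}$ does not land in it, so there is no candidate right adjoint). What is needed---and what the paper supplies---is the category $\c{C}=\mathsf{PsLocComp}$ of pseudocompact, locally compact spaces: it is closed under finite products in $\Top$ (for \emph{mixed} products, not just powers, one again uses Glicksberg: the product of a pseudocompact space with a pseudocompact locally compact space is pseudocompact, and finite products of locally compact spaces are locally compact); it contains $\mathsf{CompHaus}$, so the inclusion $G\colon\mathsf{CompHaus}\to\c{C}$ is defined and the adjunction restricts; and $\beta$ restricted to $\c{C}$ preserves finite products, which is part of the proof of Glicksberg's theorem. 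Your key insight---that local compactness is there precisely to make Glicksberg's criterion applicable---is correct, and the missing steps follow from the very result you cite, but as written the conclusion ``and \thmref{main} applies'' is not yet justified.
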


The paper is organized as follows: \secref[Sect.~]{AS} contains a few preliminaries on universal algebra, algebraic structures, and algebraic theories. In \secref[Sect.~]{main} we prove our main result and in \secref[Sect.~]{app} we apply this result in various settings and thereby give a proof of Theorem~A.

\section{Algebraic structures and algebraic theories}

\label{sec:AS}

Algebraic structures are objects like groups and rings. In general, they are sets equipped with operations subject to identities. The language of universal algebra makes this precise, and we refer to e.g.~Burris and Sankappanavar~\cite[Chap.~II\S10,11]{BS} for the relevent notions.

\begin{ipg}
  \label{alg-struc-set}
Let $\sign$ be an algebraic signature and let $\iden$ be a set of identities of type $\sign$. An \emph{algebraic structure of type $(\sign,\iden)$} (called a \emph{$(\sign,\iden)$-algebra} in Mac Lane \cite[Chap.~V\S6]{Mac}) is a $\sign$-algebra that satisfies every identity in $\iden$. A \emph{morphism} of such structures is a morphism of the underlying $\sign$-algebras. We write $\AS$ for the category of all algebraic structures of type $(\sign,\iden)$ and $\ff{\sign}{\iden}{} \colon \AS \to \Set$ for the forgetful functor.

By an \emph{algebraic structure} we just mean an algebraic structure of some type $(\sign,\iden)$.
\end{ipg}

Note that a field is not an algebraic structure in the sense above. In the literature, the category $\AS$ is often referred to as a \emph{variety} or an \emph{equational class}.

By definition, an algebraic structure of type $(\sign,\iden)$ is, in particular, a \emph{set} (and the definition of what is means for a $\sign$-algebra to satisfy an identity refers specifically to elements). In other words, \ref{alg-struc-set} defines algebraic structures in the category $\Set$. The standard way to deal with algebraic structures in more general categories goes through algebraic theories. The book \cite{ARV} by Ad{\'a}mek, Rosick{\'y}, and Vitale is an excellent account on algebraic theories, and we shall refer to this for relevent notions and results.

\begin{ipg}
\label{Alg-Set}
Following \cite[Chap.~1]{ARV} an \emph{algebraic theory} is a small category with finite products. If $\c{T}$ is an algebraic theory, then a \emph{$\c{T}$-algebra} is a functor $\c{T} \to \Set$ that preserves finite products. A \emph{morphism} of $\c{T}$-algebras is a natural transformation. The category of all $\c{T}$-algebras and their morphisms is denoted by $\Alg{\c{T}}$.
\end{ipg}

\begin{exa}
\label{exa:N}
Let $\underline{\mathbb{N}}_0$ be the category whose objects are natural numbers and zero and in which the hom-set $\underline{\mathbb{N}}_0(m,n)$ consists of all functions $\{0,\ldots,m-1\} \to \{0,\ldots,n-1\}$. This category has finite coproducts; indeed, the coproduct of objects $m,n \in \underline{\mathbb{N}}_0$ is the sum $m+n$. Thus the opposite category $\underline{\mathbb{N}}_0^\mathrm{op}$ is an algebraic theory; we denote it by $\c{N}$ as in \cite[Exa.~1.9]{ARV}. In this category, every object $n \in \c{N}$ is the $n$-fold product $n \cong 1 \times \cdots \times 1$ of the object $1$ (this also holds for $n=0$, as the empty product in a category yields the terminal object). In the cited example, it is also proved that the functor $\eval[] \colon \Alg{\c{N}} \to \Set$ given by $A \mapsto A(1)$ is an equivalence of categories. It is customary to suppress this functor.
\end{exa}

\begin{ipg}
  Let $\c{C}$ be a fixed category. Recall that a \emph{concrete category over $\c{C}$} is a pair $(\c{U},U)$ where $\c{U}$ is a category and $U \colon \c{U} \to \c{C}$ is a faithful functor. If $(\c{U},U)$ and $(\c{V},V)$ are concrete categories over $\c{C}$, then a \emph{concrete functor}
$(\c{U},U) \to (\c{V},V)$ is a functor $F \colon \c{U} \to \c{V}$ with $VF=U$. A \emph{concrete equivalence} of concrete categories $(\c{U},U)$ and $(\c{V},V)$ over $\c{C}$ is a pair of quasi-inverse concrete functors $(\c{U},U) \rightleftarrows (\c{V},V)$.
\end{ipg}

\begin{ipg}
\label{one-sorted}
Following \cite[Def.~11.3]{ARV} a \emph{one-sorted algebraic theory} is a pair $(\c{T},T)$, where $\c{T}$ is an algebraic theory whose objects are natural numbers and zero and $T \colon \c{N} \to \c{T}$ is a product preserving functor which is the identity of objects. The definition is due to Lawvere \cite{Lawvere}. 

Let $(\c{T},T)$ be a one-sorted algebraic theory. As noted above, one has $n \cong 1 \times \cdots \times 1$ ($n$ copies) in the category $\c{N}$. By applying the functor $T$ (which is the identity on objects and preserves products) to this isomorphism, one gets that $n \cong 1 \times \cdots \times 1$ also holds in $\c{T}$.
\end{ipg}

\begin{ipg}
\label{concrete}
For every one-sorted algebraic theory $(\c{T},T)$ the functor \smash{$\xymatrix@C=1.6pc{\Alg{\c{T}} \ar[r]^-{\Alg{T}} & \Alg{\c{N}} \simeq \Set}$} is faithful by \cite[Prop.~11.8]{ARV}, and hence $(\Alg{\c{T}},\Alg{T})$ is a concrete category over $\Set$. The pair $(\AS,\ff{\sign}{\iden}{})$ from \ref{alg-struc-set} is also a concrete category over $\Set$. It is proved in \cite[Thm.~13.11]{ARV} that for every algebraic signature $\sign$ and every set $\iden$ of identities of type $\sign$, there exists a one-sorted algebraic theory $(\c{T}_{\sign,\iden},T_{\sign,\iden})$ and a concrete equivalence,
\begin{equation}
  \label{eq:Struc-Set}
  (\Alg{\c{T}_{\sign,\iden}},\Alg{T_{\sign,\iden}}) \,\simeq\, (\AS,\ff{\sign}{\iden}{})\,.
\end{equation}
Moreover, by ``one-sorted algebraic duality'' \cite[Thm.~11.39]{ARV} such a one-sorted algebraic theory $(\c{T}_{\sign,\iden},T_{\sign,\iden})$ is unique up to isomorphism (of one-sorted algebraic theories).

As shown in \cite[Rmk.~11.24 and Chap.~13]{ARV}, the algebraic theory $\c{T}_{\sign,\iden}$ can be con\-structed directly from the left adjoint of the forgetful functor $\ff{\sign}{\iden}{}$. 
\end{ipg}

Now, the standard way to define algebraic structures of type $(\sign,\iden)$ in a general category (with finite products) is as follows.

\begin{dfn}
Let $\c{T}$ be an algebraic theory and let $\c{C}$ be any category with finite products. 
A \emph{$\c{T}$-algebra in $\c{C}$} is a functor $\c{T} \to \c{C}$ that preserves finite products. A \emph{morphism} of $\c{T}$-algebras in $\c{C}$ is a natural transformation. The category of all $\c{T}$-algebras in $\c{C}$ and their morphisms is denoted by $\Alg[\c{C}]{\c{T}}$. Thus, $\Alg[\Set]{\c{T}}$ is nothing but $\Alg{\c{T}}$ from \ref{Alg-Set}. 
\end{dfn}

The next lemma is straightforward to prove (the proof is the same as in the case $\c{C}=\Set$).

\begin{lem}
  Let $\c{C}$ be a category with finite products. There is an equivalence of categories $\eval \colon \Alg[\c{C}]{\c{N}} \to \c{C}$ given by $\ind{C} \mapsto \ind{C}(1)$. \qed
\end{lem}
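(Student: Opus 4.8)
The plan is to check that $\eval$ is full, faithful, and essentially surjective, which by the usual characterisation of equivalences suffices. The mechanism behind all three properties is the observation recorded in \exaref{N} that in $\c{N}$ every object $n$ is canonically the $n$-fold product $1 \times \cdots \times 1$, the structural projections being the morphisms $\pi_i \colon n \to 1$ (for $0 \le i \le n-1$) corresponding to the functions $\{0\} \to \{0,\ldots,n-1\}$, $0 \mapsto i$. Since a $\c{N}$-algebra in $\c{C}$ preserves finite products, any $\ind{C} \in \Alg[\c{C}]{\c{N}}$ satisfies $\ind{C}(n) \cong \ind{C}(1)^n$, with the morphisms $\ind{C}(\pi_i)$ exhibiting $\ind{C}(n)$ as this product. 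This is exactly what allows one to recover $\ind{C}$, and every morphism between such algebras, from the single object $\ind{C}(1)$.

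For essential surjectivity, I would fix $C \in \c{C}$ and choose, for each $n \ge 0$, a product $C^n$ of $n$ copies of $C$ (with $C^0$ terminal and $C^1 = C$) together with its projections $\mathrm{pr}_i \colon C^n \to C$. Define $\ind{C} \colon \c{N} \to \c{C}$ on objects by $\ind{C}(n) = C^n$, and on a morphism $\varphi \colon m \to n$ of $\c{N}$---that is, a function $\varphi \colon \{0,\ldots,n-1\} \to \{0,\ldots,m-1\}$---by letting $\ind{C}(\varphi) \colon C^m \to C^n$ be the unique morphism whose $i$-th component is $\mathrm{pr}_{\varphi(i)}$. Functoriality and preservation of finite products then follow from the universal property of the products $C^n$, and by construction $\eval(\ind{C}) = \ind{C}(1) = C$.

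For fullness and faithfulness, let $\ind{C}, \ind{D} \in \Alg[\c{C}]{\c{N}}$ and let $f \colon \ind{C}(1) \to \ind{D}(1)$ be a morphism in $\c{C}$. Naturality of a transformation $\ind{C} \Rightarrow \ind{D}$ with respect to the projections $\pi_i$ forces its component at $n$ to be the morphism $\ind{C}(n) \to \ind{D}(n)$ induced by $f$ under the identifications $\ind{C}(n) \cong \ind{C}(1)^n$ and $\ind{D}(n) \cong \ind{D}(1)^n$; in particular the transformation is pinned down by its value at $1$, which gives faithfulness. Conversely, taking this formula as the definition of $\alpha_n$ produces a natural transformation with $\alpha_1 = f$, so $\eval$ is full and induces a bijection between $\c{C}(\ind{C}(1),\ind{D}(1))$ and the natural transformations $\ind{C} \Rightarrow \ind{D}$. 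The only point requiring care---and the main, though routine, obstacle---is to verify naturality of the reconstructed $\alpha$ for an \emph{arbitrary} morphism of $\c{N}$ rather than merely for the projections; as with the functoriality of $\ind{C}$ above, this reduces to the uniqueness clause in the universal property of the products, which is precisely why the argument is word-for-word the one for $\c{C} = \Set$.
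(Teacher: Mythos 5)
Your proposal is correct and is precisely the routine argument the paper has in mind: the paper omits the proof entirely, remarking only that it is ``the same as in the case $\c{C}=\Set$'' (the case treated in \cite[Exa.~1.9]{ARV}), and that argument is exactly your verification that $\eval$ is fully faithful and essentially surjective, using that every $n$ in $\c{N}$ is the $n$-fold product of $1$ so that any product-preserving $\ind{C}$ satisfies $\ind{C}(n) \cong \ind{C}(1)^n$. Your handling of the variance (a morphism $m \to n$ in $\c{N}$ being a function $\{0,\ldots,n-1\} \to \{0,\ldots,m-1\}$) and your observation that naturality of the reconstructed transformation at an arbitrary morphism follows from the uniqueness clause of the product's universal property are both correct, so there is nothing to add.
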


\begin{dfn}
  \label{dfn:Struc-C}
Let $\sign$ be an algebraic signature, let $\iden$ be a set of identities of type $\sign$, and let $\c{C}$ be a category with finite products. Let $(\c{T}_{\sign,\iden},T_{\sign,\iden})$ be the unique one-sorted algebraic theory for which there is a concrete equivalence \eqref{Struc-Set}. Define the category
\begin{displaymath}
  \AS[\c{C}] := \Alg[\c{C}]{\c{T}_{\sign,\iden}}
\end{displaymath}
and the \emph{forgetful functor} \smash{$\ff{\sign}{\iden}{\c{C}}$ as the composition $\xymatrix@C=2.5pc{
     \Alg[\c{C}]{\c{T}_{\sign,\iden}} \ar[r]^-{\Alg[\c{C}]{T_{\sign,\iden}}} & \Alg[\c{C}]{\c{N}} \ar[r]^-{\eval}_-{\simeq} & \c{C}
   }$}.

We refer to an object in $\AS[\c{C}]$ as an \emph{algebraic structure of type $(\sign,\iden)$ in $\c{C}$} (even though it is not actually an object in $\c{C}$, but a product preserving functor $\c{T}_{\sign,\iden} \to \c{C}$).
\end{dfn}
      
\begin{rmk}
  \label{rmk:action}
  The action of \smash{$\ff{\sign}{\iden}{\c{C}}$} on an object $\ind{X}$ (i.e.~a functor $\c{T}_{\sign,\iden} \to \c{C}$) is $\ind{X}(1) \in \c{C}$, and the action of \smash{$\ff{\sign}{\iden}{\c{C}}$} on a morphism $\ind{\sigma} \colon \ind{X} \to \ind{Y}$ (i.e.~a natural transformation) is $\ind{\sigma}_1$. 
  
  As noted in \ref{one-sorted} one has $n \cong 1 \times \cdots \times 1$ in $\c{T}_{\sign,\iden}$. Thus, for any morphism $\ind{\sigma} \colon \ind{X} \to \ind{Y}$ of
$\c{T}_{\sign,\iden}$-algebras in $\c{C}$ one has $\ind{\sigma}_n \cong \ind{\sigma}_1 \times \cdots \times \ind{\sigma}_1$. This has two immediate consequences:
\begin{prt}
  \item The functor \smash{$\ff{\sign}{\iden}{\c{C}}$} is faithful; thus \smash{$(\AS[\c{C}],\ff{\sign}{\iden}{\c{C}})$} is a concrete category over $\c{C}$.
  \item If \smash{$\ff{\sign}{\iden}{\c{C}}(\ind{\sigma})$} is an isomorphism, then so is $\ind{\sigma}$.
\end{prt} 
\end{rmk}      

\begin{ipg}
\label{group_objects}
\emph{A priori} an algebraic strucure of type $(\sign,\iden)$ in a category $\c{C}$ is not actully an object in $\c{C}$, but instead a product preserving functor $\c{T}_{\sign,\iden} \to \c{C}$. However, it is possible to---and well-known that one can---interpret such a functor as an actual object in $\c{C}$ equipped with some additional structure. For example, if $\sign_{\Grp}$ is the algebraic signature and $\iden_{\Grp}$ is the set of identities for groups, then the concrete category $\ASarg[\c{C}]{\sign_{\Grp}}{\iden_{\Grp}}$ of algebraic structures of type $(\sign_{\Grp},\iden_{\Grp})$ in $\c{C}$ is concretely equivalent to the concrete category $\Grp(\c{C})$ of \emph{group objects} in $\c{C}$. We remind the reader that a group object in $\c{C}$ is a quadruple $(C,m,u,i)$ where $C$ is an (actual) object in $\c{C}$ and $m,u,i$ are morphisms (where $\ter$ is the terminal object in $\c{C}$):
\begin{align*}
  C \times C &\stackrel{m}{\longrightarrow} C \quad \text{(called multiplication)} \\
  \ter &\stackrel{u}{\longrightarrow} C \quad \text{(called unit)} \\ 
  C &\stackrel{i}{\longrightarrow} C \quad \text{(called inverse)}
\end{align*}  
that make the expected diagrams commutative. The category $\ASarg[\c{C}]{\sign_{\Grp}}{\iden_{\Grp}}$ is convenient for working with group structures in $\c{C}$
from a theoretical point of view, however, in specific examples (see \secref[Sect.~]{app}) it is more natural to have the category $\Grp(\c{C})$ in mind.
\end{ipg}

\section{The main result}

\label{sec:main}

Throughout this section, we fix an algebraic signature $\sign$ and a set $\iden$ of identities of type $\sign$. For a category $\c{C}$ with finite products we consider the category $\AS[\c{C}]$ of algebraic structures of type $(\sign,\iden)$ in $\c{C}$ and its forgetful functor $\ffsub{\c{C}} \colon \AS[\c{C}] \to \c{C}$ from \dfnref[Def.~]{Struc-C}. 

\begin{dfn}
By an \emph{algebraic structure of type $(\sign,\iden)$ on an object $C \in \c{C}$} we mean an object $\ind{C} \in \AS[\c{C}]$ such that $\ffsub{\c{C}}(\ind{C})=C$. 
\end{dfn}

This is the definition we shall formally use. However, as illustrated in \ref{group_objects}, one can think of an algebraic structure $\ind{C}$ on an object $C \in \c{C}$ as a pair $\ind{C}=(C,\{f_\sigma\}_{\sigma \in \sign})$ where $\{f_\sigma\}_{\sigma \in \sign}$ is a collection of morphisms in $\c{C}$, determind by the signature $\sign$, that make certain diagrams, determined by the identities $\iden$, commutative.

Now suppose that $F \colon \c{C} \to \c{D}$ is a product preserving functor between categories with finite products. There is a commutative diagram,
\begin{displaymath}
  \xymatrix@C=4pc{
    \Alg[\c{C}]{\c{T}_{\sign,\iden}} 
    \ar[d]_-{\Alg[F]{\c{T}_{\sign,\iden}}}    
    \ar[r]^-{\Alg[\c{C}]{T_{\sign,\iden}}} 
    &
    \Alg[\c{C}]{\c{N}}     
    \ar[d]^-{\Alg[F]{\c{N}}}
    \ar[r]^-{\eval}
    &
    \c{C}
    \ar[d]^-{F}
    \\
    \Alg[\c{D}]{\c{T}_{\sign,\iden}} 
    \ar[r]^-{\Alg[\c{D}]{T_{\sign,\iden}}} 
    &
    \Alg[\c{D}]{\c{N}}     
    \ar[r]^-{\eval[\c{D}]}
    &
    \c{D},
  }
\end{displaymath}
where the ``horizontal'' functors $\Alg[\c{C}]{T_{\sign,\iden}}$ and $\Alg[\c{D}]{T_{\sign,\iden}}$ map a functor $\ind{X}$ to $\ind{X} \circ T_{\sign,\iden}$, and the ``vertical'' functors $\Alg[F]{\c{T}_{\sign,\iden}}$ and $\Alg[F]{\c{N}}$ map a functor $\ind{X}$ to $F \circ \ind{X}$. If we write $\ind{F}$ for the functor $\Alg[F]{\c{T}_{\sign,\iden}}$, i.e.~$\ind{F}(\ind{X})=F\circ \ind{X}$, then the commutative diagram above is:
\begin{equation}
  \label{eq:CD}
  \begin{gathered}
  \xymatrix{
    \AS[\c{C}] \ar[r]^-{\ffsub{\c{C}}} 
    \ar[d]_-{\ind{F}}
    & \c{C} \ar[d]^-{F}
    \\
    \AS[\c{D}] \ar[r]^-{\ffsub{\c{D}}} & \c{D}.    
  }
  \end{gathered}  
\end{equation}

Suppose that $\ind{D}$ is an algebraic structure of type $(\sign,\iden)$ on a given object $D \in \c{D}$. The commutative diagram \eqref{CD} induces two functors between comma categories (\cite[Chap.~II\S6]{Mac}):
\begin{align*}
  U_F^{\ind{D}} \colon (\ind{D} \downarrow \ind{F}) \longrightarrow (D \downarrow F)
  \quad &\text{given by} \quad (\ind{C},\ind{\varphi}) \longmapsto (\ffsub{\c{C}}(\ind{C}),\ffsub{\c{D}}(\ind{\varphi})) \,, \ \ \text{and}    
  \\
  U^F_{\ind{D}} \colon (\ind{F} \downarrow \ind{D}) \longrightarrow (F \downarrow D)
  \quad &\text{given by} \quad (\ind{C},\ind{\psi}) \longmapsto (\ffsub{\c{C}}(\ind{C}),\ffsub{\c{D}}(\ind{\psi}))\,.
\end{align*}

With these functors at hand, we can now make precise what is meant by transport (ascent and descent) of algebraic structures along morphisms.

\begin{dfn}
  \label{dfn:transport}
  Let $\ind{D}$ be an algebraic structure of type $(\sign,\iden)$ on a given object $D \in \c{D}$.  
  \begin{prt}
  
    \item Let $C \in \c{C}$ be an object and let $\varphi \colon D \to F(C)$ be a morphism, i.e.~\mbox{$(C,\varphi) \in (D \downarrow F)$}. 
  
  We say that the algebraic structure $\ind{D}$ \emph{ascends along $\varphi$} if there exists an algebraic structure $\ind{C}$ of type $(\sign,\iden)$ on $C$ and a morphism $\ind{\varphi} \colon \ind{D} \to \ind{F}(\ind{C})$ in $\AS[\c{D}]$ such that $\ffsub{\c{D}}(\ind{\varphi})=\varphi$, that is, $(\ind{C},\ind{\varphi})$ is an object in $(\ind{D} \downarrow \ind{F})$ with \smash{$U_F^{\ind{D}}(\ind{C},\ind{\varphi}) = (C,\varphi)$}.
  
  We say that the algebraic structure $\ind{D}$ \emph{ascends uniquely along $\varphi$} if there a unique, up to isomorphism, object $(\ind{C},\ind{\varphi})$ in $(\ind{D} \downarrow \ind{F})$ with \smash{$U_F^{\ind{D}}(\ind{C},\ind{\varphi}) = (C,\varphi)$}.
  
    \item Let $C \in \c{C}$ be an object and let $\psi \colon F(C) \to D$ be a morphism, i.e.~\mbox{$(C,\psi) \in (F \downarrow D)$}. 
  
  We say that the algebraic structure $\ind{D}$ \emph{descends along $\psi$} if there exists an algebraic structure $\ind{C}$ of type $(\sign,\iden)$ on $C$ and a morphism $\ind{\psi} \colon \ind{F}(\ind{C}) \to \ind{D}$ in $\AS[\c{D}]$ such that $\ffsub{\c{D}}(\ind{\psi})=\psi$, that is, $(\ind{C},\ind{\psi})$ is an object in $(\ind{F} \downarrow \ind{D})$ with \smash{$U^F_{\ind{D}}(\ind{C},\ind{\psi}) = (C,\psi)$}.
  
  We say that the algebraic structure $\ind{D}$ \emph{descends uniquely along $\psi$} if there a unique, up to isomorphism, object $(\ind{C},\ind{\psi})$ in $(\ind{F} \downarrow \ind{D})$ with \smash{$U^F_{\ind{D}}(\ind{C},\ind{\psi}) = (C,\psi)$}.  

    \end{prt}
\end{dfn}

\begin{lem}
  \label{lem:adjunction}
  Let $\langle F,G,\eta,\varepsilon \rangle \colon \c{C} \to \c{D}$ be an adjunction
of product preserving functors between categories with finite products\footnote{\ Of course, $G$ always preserves products since it is a right adjoint, so the assumption on the functors is really that $F$ preserves finite products.}. The induced functors $\ind{F}$,$\ind{G}$ are part of an adjunction
\begin{displaymath}
  \langle \ind{F},\ind{G},\ind{\eta},\ind{\varepsilon} \rangle \colon \AS[\c{C}] \longrightarrow \AS[\c{D}]
\end{displaymath}  
  with $\ffsub{\c{C}}(\ind{\eta}_{\ind{C}}) = \eta_{\ffsub{\c{C}}(\ind{C})}$ for $\ind{C} \in \AS[\c{C}]$ and $\ffsub{\c{D}}(\ind{\varepsilon}_{\ind{D}}) = \varepsilon_{\ffsub{\c{D}}(\ind{D})}$ for $\ind{D} \in \AS[\c{D}]$.
\end{lem}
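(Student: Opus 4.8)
\emph{Proof proposal.} The plan is to build $\ind{\eta}$ and $\ind{\varepsilon}$ by \emph{whiskering} the given unit and counit with the structure functors, and then to verify every adjunction axiom componentwise on $\c{T}_{\sign,\iden}$, where it collapses to the corresponding axiom for $\langle F,G,\eta,\varepsilon\rangle$. Before that I would check that $\ind{F}$ and $\ind{G}$ (the latter defined, like $\ind{F}$, by post-composition: $\ind{G}(\ind{D}) = G\circ\ind{D}$) really take values in the subcategories of product-preserving functors. For $\ind{F}(\ind{C}) = F\circ\ind{C}$ this is where the standing hypothesis that $F$ preserves finite products is used; for $\ind{G}(\ind{D}) = G\circ\ind{D}$ it suffices that the right adjoint $G$ preserves all limits, hence finite products, which is the content of the footnote.

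Next, for $\ind{C}\in\AS[\c{C}]$ I would set $\ind{\eta}_{\ind{C}} \colon \ind{C} \to \ind{G}\ind{F}(\ind{C}) = GF\circ\ind{C}$ to be the whiskered transformation whose component at an object $n$ of $\c{T}_{\sign,\iden}$ is $\eta_{\ind{C}(n)}$, and dually set the component of $\ind{\varepsilon}_{\ind{D}} \colon \ind{F}\ind{G}(\ind{D}) \to \ind{D}$ at $n$ to be $\varepsilon_{\ind{D}(n)}$. Each is a natural transformation by the naturality of $\eta$ (resp.\ $\varepsilon$) applied to the arrows $\ind{C}(\alpha)$ (resp.\ $\ind{D}(\alpha)$), $\alpha$ an arrow of $\c{T}_{\sign,\iden}$; its source and target are product-preserving by the previous paragraph, so it is a genuine morphism of $\AS[\c{C}]$, resp.\ $\AS[\c{D}]$.

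The remaining verifications are then all done objectwise at $n$ and reduce to the original adjunction: naturality of $\ind{\eta}$ in $\ind{C}$ is, at $n$, the naturality square of $\eta$ at the arrow $\ind{\sigma}_n$ (and dually for $\ind{\varepsilon}$), while the two triangle identities for $\langle\ind{F},\ind{G},\ind{\eta},\ind{\varepsilon}\rangle$ become at $n$ the two triangle identities for $\langle F,G,\eta,\varepsilon\rangle$ at $\ind{C}(n)$ and $\ind{D}(n)$. The compatibility with the forgetful functors is then immediate from \rmkref{action}: since $\ffsub{\c{C}}$ sends a morphism to its component at $1$, one gets $\ffsub{\c{C}}(\ind{\eta}_{\ind{C}}) = (\ind{\eta}_{\ind{C}})_1 = \eta_{\ind{C}(1)} = \eta_{\ffsub{\c{C}}(\ind{C})}$, and dually $\ffsub{\c{D}}(\ind{\varepsilon}_{\ind{D}}) = \varepsilon_{\ffsub{\c{D}}(\ind{D})}$.

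Conceptually, all of this expresses the fact that post-composition with functors is 2-functorial and that 2-functors preserve adjunctions: $F \dashv G$ induces an adjunction $F\circ- \dashv G\circ-$ between the full functor categories, and one merely restricts it along the inclusions of the product-preserving subcategories. Accordingly I do not expect a genuine obstacle; the one point to be careful about is exactly that restriction, namely that $\ind{F},\ind{G}$ preserve product-preservation and that the whiskered $\ind{\eta},\ind{\varepsilon}$ land among morphisms of the subcategories, so that the ambient adjunction on functor categories cuts down to one between $\AS[\c{C}]$ and $\AS[\c{D}]$. This is the mild content of the first paragraph, in keeping with the author's remark that the proof is completely formal.
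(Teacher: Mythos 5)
Your proposal is correct; it proves the right statement with all the essential verifications accounted for, but it takes the complementary route to the paper's. You set up the adjunction in its unit--counit formulation: define $\ind{\eta}_{\ind{C}}$ and $\ind{\varepsilon}_{\ind{D}}$ by whiskering, with components $\eta_{\ind{C}(n)}$ and $\varepsilon_{\ind{D}(n)}$, then check naturality and the two triangle identities componentwise, where they collapse to the corresponding axioms for $\langle F,G,\eta,\varepsilon\rangle$. The paper instead works with the hom-set formulation: it defines mutually inverse maps $\alpha(\sigma)=G\sigma\circ\eta\mspace{1mu}\ind{C}$ and $\beta(\tau)=\varepsilon\ind{D}\circ F\tau$ between $\c{D}^{\c{T}_{\sign,\iden}}\!\bigl(\ind{F}(\ind{C}),\ind{D}\bigr)$ and $\c{C}^{\c{T}_{\sign,\iden}}\!\bigl(\ind{C},\ind{G}(\ind{D})\bigr)$, proves $\beta\alpha=\id{}$ and $\alpha\beta=\id{}$ using naturality of $\varepsilon$ and the triangle identity $\varepsilon F\circ F\eta=\id{F}$ of the original adjunction, and only afterwards extracts the unit as $\ind{\eta}_{\ind{C}}=\alpha(\id{\ind{F}(\ind{C})})=\eta\mspace{1mu}\ind{C}$ --- exactly the whiskered transformation you take as your starting point. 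So the two arguments contain the same computations in opposite order: in the paper the triangle identities of $\langle F,G,\eta,\varepsilon\rangle$ are consumed in proving bijectivity, whereas you use them to verify the triangle identities of the lifted adjunction directly. The treatment of the forgetful functors is identical (evaluate at the object $1$ and invoke \rmkref{action}). Two things your write-up makes explicit that the paper leaves tacit are the well-definedness checks --- that $F\circ\ind{C}$ and $G\circ\ind{D}$ again preserve finite products, so that $\ind{F}$ and $\ind{G}$ really land in $\AS[\c{D}]$ and $\AS[\c{C}]$ --- and the 2-categorical summary (post-composition is 2-functorial, 2-functors preserve adjunctions, and the resulting adjunction on functor categories restricts to the full subcategories of product-preserving functors), which is a clean conceptual packaging of the whole lemma; the paper's route is in exchange slightly more economical, since bijectivity of $\alpha$ is a single two-line computation.
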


\begin{proof}
  We use the shorthand notation $\c{T} = \c{T}_{\sign,\iden}$ and $T = T_{\sign,\iden}$. 
  
  By definition, $\AS[\c{X}]$ is a full subcategory of the functor category $\c{X}^\c{T}$ ($\c{X}=\c{C},\c{D}$). Thus, for all product preserving functors $\ind{C} \colon \c{T} \to \c{C}$ and $\ind{D} \colon \c{T} \to \c{D}$ we must construct a natural bijection (where we have written $F\ind{C} = F \circ \ind{C}$ and $G\ind{D} = G \circ \ind{D}$):
\begin{displaymath}
  \xymatrix{
   \c{D}^\c{T}\!(F\ind{C},\ind{D}) = \c{D}^\c{T}\!(\ind{F}(\ind{C}),\ind{D})
    \ar@<0.5ex>[r]^-{\alpha} &
   \c{C}^\c{T}\!(\ind{C},\ind{G}(\ind{D})) = \c{C}^\c{T}\!(\ind{C},G\ind{D})
    \ar@<0.5ex>[l]^-{\beta}
  }
  \quad \text{(where $\beta=\alpha^{-1}$).}
\end{displaymath}  
For a natural transformation $\sigma \colon F\ind{C} \to \ind{D}$ we set $\alpha(\sigma) = G\sigma \circ \eta\mspace{1mu}\ind{C}$, and for a natural transformation $\tau \colon \ind{C} \to G\ind{D}$ we set \smash{$\beta(\tau) = \varepsilon \ind{D} \circ F\tau$}. We then have
\begin{displaymath}
\beta\alpha(\sigma) = \beta(G\sigma \circ \eta\mspace{1mu}\ind{C}) = \varepsilon \ind{D} \circ FG\sigma \circ F\eta\mspace{1mu}\ind{C} = \sigma \circ \varepsilon F\ind{C} \circ F\eta\mspace{1mu}\ind{C} = \sigma \circ \id{F\ind{C}} = \sigma\,,
\end{displaymath}  
where the first and second equalities are by definition, the third equality follows as $\varepsilon$ is a natural transformation, and the fourth equality follows as $\varepsilon F \circ F\eta = \id{F}$. Thus $\beta\,\alpha$ is the identity on \smash{$\c{D}^\c{T}\!(F\ind{C},\ind{D})$}, and a similar argument shows that $\alpha\beta$ is the identity on \smash{$\c{C}^\c{T}\!(\ind{C},G\ind{D})$}.

The unit of this adjunction is \smash{$\ind{\eta}_{\ind{C}} = \alpha(\id{\ind{F}(\ind{C})}) = \eta\mspace{1mu}\ind{C} \colon \ind{C} \to \ind{G}\ind{F}(\ind{C}) = GF\ind{C}$}. Thus,  $\ffsub{\c{C}}(\ind{\eta}_{\ind{C}})$ is the morphism $\eta_{\ind{C}(1)} \colon \ind{C}(1) \to GF\ind{C}(1)$, which is $\eta_{\ffsub{\c{C}}(\ind{C})} \colon \ffsub{\c{C}}(\ind{C}) \to GF(\ffsub{\c{C}}(\ind{C}))$; see~\rmkref{action}. A similar argument shows that  $\ffsub{\c{D}}(\ind{\varepsilon}_{\ind{D}}) = \varepsilon_{\ffsub{\c{D}}(\ind{D})}$.
\end{proof}

\begin{thm}
  \label{thm:main}
Let $\langle F,G,\eta,\varepsilon \rangle \colon \c{C} \to \c{D}$ be an adjunction
of product preserving functors between categories with finite products. Let $\sign$ be any algebraic signature and let $\iden$ be a set of identities of type $\sign$. The following conclusions hold.
  
 \begin{prt}
 \item Let $C \in \c{C}$, set $D=F(C) \in \c{D}$, and consider the unit $\eta_C \colon C \to G(D)$. Every alge\-braic structure of type $(\sign,\iden)$ on $C$ ascends uniquely along $\eta_C$. 
 
 \item Let $D \in \c{D}$, set $C=G(D) \in \c{C}$, and consider the counit $\varepsilon_D \colon F(C) \to D$. Every alge\-braic structure of type $(\sign,\iden)$ on $D$ descends uniquely along $\varepsilon_D$. 
  
 \end{prt}
\end{thm}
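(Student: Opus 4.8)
The plan is to reduce both parts to the induced adjunction $\langle \ind{F},\ind{G},\ind{\eta},\ind{\varepsilon}\rangle$ of \lemref{adjunction}, combined with the universal property of the (co)unit and the isomorphism-reflecting property of the forgetful functor recorded in \rmkref{action}. The two parts are formally dual: part \prtlbl{a} is \dfnref{transport} applied with $G$ in the role of the functor $F$ (so that the roles of $\c{C}$ and $\c{D}$ are interchanged, which is legitimate because $G$, being a right adjoint, preserves products), while part \prtlbl{b} applies the definition to $F$ directly. I would therefore prove \prtlbl{a} in full and obtain \prtlbl{b} by the evident dualization (replace the unit by the counit, $\ind{F}$ by $\ind{G}$, and ``initial'' by ``terminal'').

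First I would unwind the definitions for part \prtlbl{a}. Ascent of a structure $\ind{C}$ on $C$ along $\eta_C \colon C \to G(D)$ amounts to producing an object of the comma category $(\ind{C} \downarrow \ind{G})$ that lies over $(D,\eta_C) = (F(C),\eta_C)$ under the induced functor $(\ind{C} \downarrow \ind{G}) \to (C \downarrow G)$, and uniqueness of ascent is uniqueness up to isomorphism of such an object. For existence I would exhibit the candidate $(\ind{F}(\ind{C}),\ind{\eta}_{\ind{C}})$: by \lemref{adjunction} one has $\ffsub{\c{C}}(\ind{\eta}_{\ind{C}}) = \eta_{\ffsub{\c{C}}(\ind{C})} = \eta_C$, and the commuting square \eqref{CD} gives $\ffsub{\c{D}}(\ind{F}(\ind{C})) = F(\ffsub{\c{C}}(\ind{C})) = F(C) = D$, so this object does lie over $(D,\eta_C)$.

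For uniqueness I would invoke that $\eta_C$ is a universal arrow from $C$ to $G$, i.e.\ that $(F(C),\eta_C)$ is an initial object of $(C \downarrow G)$, and that $\ind{\eta}_{\ind{C}}$ likewise makes $(\ind{F}(\ind{C}),\ind{\eta}_{\ind{C}})$ initial in $(\ind{C} \downarrow \ind{G})$. Given any $(\ind{D}',\ind{\varphi}')$ in $(\ind{C} \downarrow \ind{G})$ lying over $(D,\eta_C)$, initiality supplies a unique morphism $\ind{h} \colon (\ind{F}(\ind{C}),\ind{\eta}_{\ind{C}}) \to (\ind{D}',\ind{\varphi}')$; applying the forgetful functor sends $\ind{h}$ to an endomorphism of the initial object $(F(C),\eta_C)$ of $(C \downarrow G)$, which is forced to be $\id{F(C)}$. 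Thus $\ffsub{\c{D}}(\ind{h})$ is an isomorphism, and \rmkref{action}\prtlbl{b} upgrades this to the statement that $\ind{h}$ is an isomorphism in $\AS[\c{D}]$, whence $(\ind{D}',\ind{\varphi}') \cong (\ind{F}(\ind{C}),\ind{\eta}_{\ind{C}})$.

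The only genuine work is this uniqueness step, and its crux is precisely the passage from ``$\ffsub{\c{D}}(\ind{h})$ is the identity'' to ``$\ind{h}$ is an isomorphism,'' which is exactly what \rmkref{action}\prtlbl{b} provides; everything else is bookkeeping with \eqref{CD} and \lemref{adjunction}. I expect no real obstacle beyond the care needed to apply the classical and the induced universal properties in the correct comma categories and to keep the relabeling $\c{C} \leftrightarrow \c{D}$, $F \leftrightarrow G$ in part \prtlbl{a} consistent throughout.
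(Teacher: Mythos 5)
Your proposal is correct and is essentially the paper's own proof: existence of the ascent comes from \lemref{adjunction} together with the commutative square \eqref{CD}, and uniqueness from playing the initiality of $(\ind{F}(\ind{C}),\ind{\eta}_{\ind{C}})$ in $(\ind{C}\downarrow\ind{G})$ against that of $(F(C),\eta_C)$ in $(C\downarrow G)$ (forcing the underlying morphism to be $\id{D}$) and then upgrading to an isomorphism via \rmkref{action}\prtlbl{b}. The paper likewise proves \prtlbl{a} in full and dismisses \prtlbl{b} as symmetric, so the two arguments differ only in phrasing.
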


\begin{proof}
We only prove part \prtlbl{a} since the proof of \prtlbl{b} is similar. In the proof, we use the notation of \lemref{adjunction}. Let $\ind{C}$ be an algebraic structure of type $(\sign,\iden)$ on $C$. Then $\ind{D}=\ind{F}(\ind{C})$ is an algebraic structure of type $(\sign,\iden)$ on $D$ as $\ffsub{\c{D}}(\ind{D}) = \ffsub{\c{D}}\ind{F}(\ind{C}) = F\ffsub{\c{C}}(\ind{C}) = F(C) = D$. By \lemref{adjunction}, the unit $\ind{\eta}_{\ind{C}} \colon \ind{C} \to \ind{G}(\ind{D})$ has the property that $\ffsub{\c{C}}(\ind{\eta}_{\ind{C}}) = \eta_C$. This proves that the given algebraic structure $\ind{C}$ ascends along $\eta_C$. To prove that it ascends uniquely, let $\ind{D}_0$ be any algebraic structure of type $(\sign,\iden)$ on $D$ and let $\ind{\varphi} \colon \ind{C} \to \ind{G}(\ind{D}_0)$ be any morphism with $\ffsub{\c{C}}(\ind{\varphi}) = \eta_C$. As  $\ind{\eta}_{\ind{C}} \colon \ind{C} \to \ind{G}(\ind{D})$ is a universal arrow from $\ind{C}$ to $\ind{G}$, that is, $(\ind{D},\ind{\eta}_{\ind{C}})$ is the initial object in the comma category $(\ind{C} \downarrow \ind{G})$, there is a (unique) morphism $\ind{\delta} \colon \ind{D} \to \ind{D}_0$ which makes the left diagram in the following display commutative:
\begin{displaymath}
  \xymatrix{
    \ind{C} \ar[r]^-{\ind{\eta}_{\ind{C}}} \ar[d]_-{\ind{\varphi}} & \ind{G}(\ind{D}) \ar[dl]^-{\ind{G}(\ind{\delta})}
    \\
    \ind{G}(\ind{D}_0) & {}
  }
  \qquad \qquad
  \xymatrix{
    C \ar[r]^-{\eta_C} \ar[d]_-{\eta_C} & G(D) \ar[dl]^-{G(\ffsub{\c{D}}(\ind{\delta}))}
    \\
    G(D) & {}
  }
\end{displaymath}
The right diagram above is obtained from the left one by applying the functor $\ffsub{\c{C}}$. As the unit $\eta_C \colon C \to G(D)$ is a universal arrow from $C$ to $G$, that is, $(D,\eta_C)$ is the initial object in the comma category $(C \downarrow G)$, then the morphism $\ffsub{\c{D}}(\ind{\delta})$ must be the identity $\id{D}$ on $D$. It follows from \rmkref{action}(b) that $\ind{\delta}$ is an isomorphism.
\end{proof}

\section{Applications}

\label{sec:app}

In this final section, we apply \thmref{main} in some specific examples and thereby give a proof of Theorem~A in the Introduction.

\begin{exa}
  Let $\c{C}=\mathsf{Met}$ be the category whose objects are all metric spaces and whose morphisms are all continuous functions. This category has finite products, indeed, the product of metric spaces $(M,d_M)$ and $(N,d_N)$ is $(M \times N,d_{M \times N})$ where $d_{M \times N}$ is given by
\begin{displaymath}  
  d_{M \times N}((x_1,y_1),(x_2,y_2))=\max\{d_M(x_1,x_2),d_N(y_1,y_2)\}\,.
\end{displaymath}    
Let $\c{D}=\mathsf{CompMet}$ be the full subcategory of $\mathsf{Met}$ consisting of all complete metric spaces. Note that $\mathsf{CompMet}$ is closed under finite products in $\mathsf{Met}$ and write \mbox{$G \colon \mathsf{CompMet} \to \mathsf{Met}$} for the inclusion functor. The functor $G$ has a left adjoint, namely the functor $F$ that maps a metric space $M$ to its completion \smash{$F(M)=\widehat{M}$} (i.e. $\mathsf{CompMet}$ is a reflective subcategory of $\mathsf{Met}$). It is not hard to see that $F$ preserves finite products. The unit of the adjunction is the canonical isometry \smash{$\eta_M \colon M \to \widehat{M}$} (whose image is dense in \smash{$\widehat{M}$}).

\thmref{main}(a) applies to this setting and shows that every algebraic structure on a metric space $M$ ascends uniquely along the map \smash{$\eta_M \colon M \to \widehat{M}$}, as asserted in Theorem~A(a).
\end{exa}

\begin{exa}
  Let $\Top$ be the category of topological spaces and let $\c{D}=\mathsf{CompHaus}$ be the full subcategory hereof whose objects are all compact Hausdorff spaces. We note that $\mathsf{CompHaus}$ is closed under finite products in $\Top$ and write \mbox{$G \colon \mathsf{CompHaus} \to \Top$} for the inclusion functor. The functor $G$ has a left adjoint, namely the functor $F$ that maps a topological space $X$ to its \smash{Stone-{\v C}ech} compactification \smash{$F(X)=\beta X$}. The functor $\beta$ does not preserve finite products, for example, $\beta(\mathbb{R} \times \mathbb{R})$ is not $\beta\mathbb{R} \times \beta\mathbb{R}$; see Walker~\cite[1.67]{Walker}. 
  
  Consider therefore the full subcategory $\c{C}=\mathsf{PsLocComp}$ of $\Top$ whose objects are pseudocompact\footnote{\ A topologcal space $X$ is pseudocompact if every continuous function $X \to \mathbb{R}$ is bounded.} and locally compact\footnote{\ A topologcal space is locally compact if every point has a local base consisting of compact neighbourhoods.} spaces. A finite product of locally compact spaces is locally compact. A product of two pseudocompact spaces need not be pseudocompact\footnote{\ Gillman and Jerison \cite[9.15]{GillmanJerison} present an example, due to Nov{\'a}k and Terasaka, of a pseudocompact space $X$ for which $X \times X$ is not
pseudocompact.} but if, in addition, one of the factors is locally compact, then the product is pseudocompact by Glicksberg~\cite[Thm.~3]{Glicksberg} or \cite[8.21]{Walker}. Hence $\c{C}$ is closed under finite products in $\Top$. Every compact space is also pseudocompact, and every compact Hausdorff space is locally compact. Thus there is an inclusion $\c{D} \subset \c{C}$. Although this inclusion is ``close'' to being an equality, it is strict\footnote{\ For a pathological example of a topological space $X$ which is both pseudocompact and locally compact, but neither compact nor Hausdorff, let $X$ be any countable set with the particular point topology.}. The restriction of $F=\beta$ to $\c{C}$ does preserve finite products; this is part of proof of \cite[Thm.~3]{Glicksberg}. In conclusion, \thmref{main}(a) applies to the situation
\begin{displaymath}
  \xymatrix{
    \c{C}=\mathsf{PsLocComp} \ar@<0.5ex>[r]^-{F=\beta} & \mathsf{CompHaus}=\c{D}\,, \ar@<0.5ex>[l]^-{G}
  }
\end{displaymath}
and shows that every algebraic structure on a pseudocompact and locally compact topological space  $X$ ascends uniquely along the canonical map $\eta_X \colon X \to \beta X$ (which is the unit of the adjunction), as asserted in Theorem~A(b).
\end{exa}

\begin{exa}
  It is proved in Munkres~\cite[Cor.~82.2]{Munkres} (see also May~\cite[Chap.~3\S8]{May}) that a topological space has a universal covering space if and only if it is path connected, locally path con\-nected, and semi-locally simply connected\footnote{As (path connected) $\Rightarrow$ (connected) and since (connected)\,+\,(locally path connected) $\Rightarrow$ (path connected), the conditions (path connected)\,+\,(locally path connected) and (connected)\,+\,(locally path connected) are the~same.}. Write $\c{D}$ for the category of all such spaces, and $\c{C}=\mathsf{SConn}$ for the category of simply connected topological spaces. Note that $\c{C}$ and $\c{D}$ are closed under finite products in $\Top$; see e.g.~\cite[II\S7~Prop.~1 and II\S8~Prop.~4]{Chevalley}.

Let $\Top_*$ be the category of pointed topological spaces and denote by $\c{C}_*$ and $\c{D}_*$ the full subcategories of $\Top_*$ whose objects are the ones in $\c{C}$ and $\c{D}$, respectively. As noted above, the inclusion functor $F \colon \c{C}_* \to \c{D}_*$ preserves finite products. Since we work with pointed spaces, the universal covering space $\c{C}_* \ni \tilde{X} \to X$ of a space $X \in \c{D}_*$ has the unique mapping property, see e.g.~\cite[II\S8~Prop.~1]{Chevalley}, in other words, $F(\tilde{X})=\tilde{X} \to X$ is a universal arrow from the inclusion functor $F$ to the object $X$. By \cite[IV\S1~Thm.~2]{Mac} this means that there is a well-defined functor $G \colon \c{D}_* \to \c{C}_*$, which assigns to each $X \in \c{D}_*$ its universal covering space $G(X)=\tilde{X}$, and that this functor $G$ is right adjoint to $F$.

Thus \thmref{main}(b) applies to this setting and shows that every algebraic structure on a space $X \in \c{D}_*$ descends uniquely along the map $\varepsilon_X \colon \tilde{X} \to X$, as asserted in Theorem~A(c).
\end{exa}

\begin{exa}
  Let $\c{D} = \Top$ be the category of all topological spaces and let $\c{C} = \mathsf{LocConn}$ be the full subcategory hereof whose objects are all locally connected spaces. Note that $\c{C}$ is closed under finite products in $\c{D}$, so the inclusion functor $F \colon \mathsf{LocConn} \to \Top$ preserves finite products. The main result in Gleason~\cite{Gleason} is that $F$ has a right adjoint $G$, which to every space $X$ assigns its so-called universal locally connected refinement $G(X)=X^*$.
  
\thmref{main}(b) applies to this setting and shows that every algebraic structure on a space $X$ descends uniquely along the map $\varepsilon_X \colon X^* \to X$, as asserted in Theorem~A(d).
\end{exa}

\enlargethispage{3ex}

\section*{Acknowledgement}

We thank Anders Kock for useful comments and suggestions.

\def\cprime{$'$}
  \providecommand{\arxiv}[2][AC]{\mbox{\href{http://arxiv.org/abs/#2}{\sf
  arXiv:#2 [math.#1]}}}
  \providecommand{\oldarxiv}[2][AC]{\mbox{\href{http://arxiv.org/abs/math/#2}{\sf
  arXiv:math/#2
  [math.#1]}}}\providecommand{\MR}[1]{\mbox{\href{http://www.ams.org/mathscinet-getitem?mr=#1}{#1}}}
  \renewcommand{\MR}[1]{\mbox{\href{http://www.ams.org/mathscinet-getitem?mr=#1}{#1}}}
\providecommand{\bysame}{\leavevmode\hbox to3em{\hrulefill}\thinspace}
\providecommand{\MR}{\relax\ifhmode\unskip\space\fi MR }
\providecommand{\MRhref}[2]{%
  \href{http://www.ams.org/mathscinet-getitem?mr=#1}{#2}
}
\providecommand{\href}[2]{#2}

\end{document}